\sloppy \allowdisplaybreaks[1]
\theoremstyle{plain}
\newtheorem{theorem}{Theorem}
\newtheorem{corollary}{Corollary}
\newtheorem{lemma}{Lemma}
\newtheorem{proposition}{Proposition}
\newtheorem{propcase}{Case}
\newtheorem{theoremcase}{Case}
\newtheorem{lemmacase}{Case}
\theoremstyle{definition}
\newtheorem{definition}{Definition}
\newtheorem{example}{Example}
\numberwithin{equation}{section}
\numberwithin{theorem}{section}
\numberwithin{definition}{section}
\numberwithin{lemma}{section}
\numberwithin{corollary}{section}
\numberwithin{proposition}{section}
\numberwithin{remark}{section}
\numberwithin{example}{section}
\numberwithin{table}{section}
\numberwithin{theoremcase}{theorem}
\numberwithin{propcase}{proposition}
\numberwithin{lemmacase}{lemma}
\begin{document}
	
	\title[Magic Polygons and Their Properties]{Magic Polygons and Their Properties}
	
	\author[V. Jakicic ]{Victoria Jakicic}
	\author[R. Bouchat]{Rachelle Bouchat}
	
	\address{Department of Mathematics, Indiana University of Pennsylvania, Indiana, PA 15701, USA}
	\email{gkwt@iup.edu}
	\address{Department of Mathematics, Indiana University of Pennsylvania, Indiana, PA 15701, USA}
	\email{rbouchat@iup.edu}
	
	\begin{abstract} 
		Magic squares are arrangements of natural numbers into square arrays, where the sum of each row, each column, and both diagonals is the same.  In this paper, the concept of a magic square with 3 rows and 3 columns is generalized to define magic polygons.  Furthermore, this paper will examine the existence of magic polygons, along with several other properties inherent to magic polygons.
	\end{abstract}

	\maketitle\thispagestyle{empty}
	
	
	\section{Introduction}
	
	In recreational mathematics, magic squares have intrigued both mathematicians and the public alike. A traditional \emph{magic square} is an $n \times n$ array filled with the numbers $1$ to $n^2$ with the requirement that each of the $n$ rows, $n$ columns, and two diagonals sum to the same total, or \emph{magic sum}. Chinese mathematicians knew of magic squares in 650 BC, and the first appearance of the Lo Shu square was in 1200 CE (see~\cite{Andrews}).
	
	The existence of magic squares has been studied, and in 1942 (see~\cite{Kraitchik}), Kraitchik produced techniques for forming magic squares for $n$ even or odd.  Additionally, the number of solutions for various sizes of magic squares has been studied.  When $n = 1$, the magic square is trivial with its only entry being $1$, and there does not exist a $2 \times 2$ magic square. When $n = 3$, there is a unique (up to symmetry) magic square (see~\cite{Swetz}), and the  number of solutions for $4 \times 4$ and $5 \times 5$ magic squares are $880$ and $275,305,224$, respectively (see~\cite{Loly}). 
	
	Many variations of magic squares have also been studied by placing restrictions on the set of numbers used to fill in the magic square or placing additional requirements on the magic sum.  For a few of these variations, see~\cite{Kraitchik2},~\cite{Lucas}, and~\cite{Tarry}.  This paper aims to extend the idea of a traditional $3\times 3$ magic square to any regular polygon.

	\begin{definition}
		Consider an $n$-sided regular polygon with nodes placed at each of the vertices, the midpoints of each side, and the center of the polygon.  Then, a \textit{magic polygon} is an $n$-sided regular polygon containing an arrangement of the numbers $1$ to $2n+1$ within this arrangement of nodes such that the sum of the three nodes on each side and the sum of the three nodes on each diagonal are equal.  This sum is called the \textit{magic sum}. 
	\end{definition}
	
	This definition is a natural generalization of the concept of a magic square as can be illustrated with the $3\times 3$ magic square below:
	
	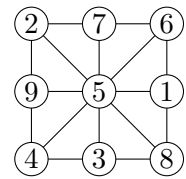
\begin{figure}[h]
		\begin{subfigure}[!h]{.4\textwidth}
			\centering
			\begin{tikzpicture}[scale=.45]
			\draw (0,0)--(4,0)--(4,4)--(0,4)--(0,0);
			\draw (1.33,0)--(1.33,4);
			\draw (2.66,0)--(2.66,4);
			\draw (0,1.33)--(4,1.33);
			\draw (0,2.66)--(4,2.66);
			
			\node at (1.995,1.995) {$5$};
			\node at (0.665,0.665) {$4$};
			\node at (1.995,0.665) {$3$};
			\node at (3.333,0.665) {$8$};
			\node at (0.665,1.995) {$9$};
			\node at (0.665,3.333) {$2$};
			\node at (3.333,1.995) {$1$};
			\node at (1.995,3.333) {$7$};
			\node at (3.333,3.333) {$6$};
			\end{tikzpicture}
			\subcaption{A traditional magic square}
			\label{fig:magicsquare}
		\end{subfigure}
		\begin{subfigure}[!h]{.5\textwidth}
			\centering
			\begin{tikzpicture}[scale=.45]
			\draw (0,0)--(4,0)--(4,4)--(0,4)--(0,0);
			\draw (0,2)--(4,2);
			\draw (2,0)--(2,4);
			\draw (0,4)--(4,0);
			\draw (0,0)--(4,4);
			\draw[fill=white] (0,4) circle (14pt);
			\draw[fill=white] (0,2) circle (14pt);
			\draw[fill=white] (0,0) circle (14pt);
			\draw[fill=white] (2,0) circle (14pt);
			\draw[fill=white] (2,2) circle (14pt);
			\draw[fill=white] (2,4) circle (14pt);
			\draw[fill=white] (4,0) circle (14pt);
			\draw[fill=white] (4,2) circle (14pt);
			\draw[fill=white] (4,4) circle (14pt);
			
			\node at (2,2) {$5$};
			\node at (0,0) {$4$};
			\node at (0,2) {$9$};
			\node at (0,4) {$2$};
			\node at (2,0) {$3$};
			\node at (2,4) {$7$};
			\node at (4,0) {$8$};
			\node at (4,2) {$1$};
			\node at (4,4) {$6$};
			\end{tikzpicture}
			\subcaption{A magic square viewed as a magic polygon}
			\label{fig:magicpolygonsquare}
		\end{subfigure}
		\caption{Visualizing the transition from magic square to magic polygon}
	\end{figure}

	\begin{example}\label{hexagon}
		For the magic hexagon depicted below, the magic sum is $m = 21$.  The sum of the three numbers on each of the sides is 21, for example $10+9+2=21$; and the sum of the three numbers on each of the diagonals is also 21, for example $5+7+9=21$.
		
		\begin{figure}[h]
			\centering
			\begin{tikzpicture}[scale=.65]
			\draw (0,0)--(2,0)--(3.17,2)--(2,4)--(0,4)--(-1.17,2)--(0,0);
			\draw (1,0)--(1,4);
			\draw (2,0)--(0,4);
			\draw (2.67,1)--(-.67,3.17);
			\draw (3.17,2)--(-1.17,2);
			\draw (2.67,3)--(-.67,1);
			\draw (0,0)--(2,4);
			\draw[fill=white] (0,0) circle (10pt);
			\draw[fill=white] (2,0) circle (10pt);
			\draw[fill=white] (3.17,2) circle (10pt);
			\draw[fill=white] (2,4) circle (10pt);
			\draw[fill=white] (0,4) circle (10pt);
			\draw[fill=white] (-1.17,2) circle (10pt);
			\draw[fill=white] (1,0) circle (10pt);
			\draw[fill=white] (1,4) circle (10pt);
			\draw[fill=white] (2.67,1) circle (10pt);
			\draw[fill=white] (-.67,3) circle (10pt);
			\draw[fill=white] (2.67,3) circle (10pt);
			\draw[fill=white] (1,2) circle (10pt);
			\draw[fill=white] (-.67,1) circle (10pt);
			
			\node at (1,2) {$7$};
			\node at (0,0) {$1$};
			\node at (2,0) {$9$};
			\node at (3.17,2) {$2$};
			\node at (2,4) {$13$};
			\node at (0,4) {$5$};
			\node at (-1.17,2) {$12$};
			\node at (1,0) {$11$};
			\node at (1,4) {$3$};
			\node at (2.67,1) {$10$};
			\node at (-.67,3) {$4$};
			\node at (2.67,3) {$6$};
			\node at (-.67,1) {$8$};
			\end{tikzpicture}
			\caption{A magic hexagon}
			\label{fig:magichexagon}
		\end{figure}
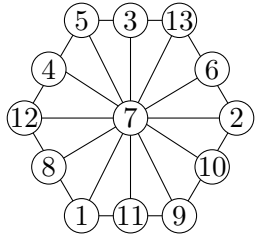
	\end{example}
	
	For those with experience in graph theory, the arrangement of nodes in a magic polygon corresponds to the vertices in a wheel graph.
	
	\section{Properties of Magic Polygons}
	
	In this section, we investigate the properties that magic polygons must have.  To begin, we consider what number from the set $\{1,2,\ldots, 2n+1\}$ must be placed in the center  node.

	\begin{proposition}\label{centernumber}
		Let $n\in\mathbb{N}\setminus\{1,2\}$.  If a magic $n-gon$ exists, then the center number is $n+1$.
	\end{proposition}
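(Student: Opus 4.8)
The plan is to pin down the center by combining two global counting identities with a pairing of the outer nodes that behaves differently according to the parity of $n$.

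First I would fix notation: let $m$ be the magic sum, $c$ the center entry, $v_1,\dots,v_n$ the vertex entries and $e_1,\dots,e_n$ the edge-midpoint entries, with $e_i$ the midpoint of the side joining $v_i$ and $v_{i+1}$ (indices mod $n$). Since the entries are a permutation of $1,\dots,2n+1$, their total is $S:=(2n+1)(n+1)$. Each of the $n$ diagonals passes through the center and, together, the diagonals partition the $2n$ outer nodes into antipodal pairs; adding the $n$ diagonal equations therefore counts the center $n$ times and each outer node once, giving
\[
nm=(S-c)+nc=S+(n-1)c .
\]
Adding the $n$ side equations instead counts each vertex twice and each midpoint once, so with $V:=\sum_i v_i$ and $E:=\sum_i e_i$ one obtains $nm=2V+E=V+(S-c)$. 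Comparing the two totals yields the clean relation $V=nc$: the vertices sum to exactly $n$ times the center.

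For even $n$ the argument then closes quickly. When $n$ is even, antipodal nodes have the same type, so each diagonal joins either two opposite vertices or two opposite midpoints, and each such pair sums to $m-c$; hence the vertices split into $n/2$ pairs and the midpoints into $n/2$ pairs, forcing $V=\tfrac{n}{2}(m-c)=E$. Feeding $V=E$ into $V+E=S-c$ gives $2V=S-c$, while $V=nc$ gives $2nc=S-c$, i.e. $(2n+1)c=S=(2n+1)(n+1)$, so $c=n+1$.

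The hard part is the odd case, where the node opposite a vertex is a midpoint, so the pairing yields only $V+E=n(m-c)$ and the shortcut $V=E$ is lost. Here I would drop to the per-index equations $v_i+e_i+v_{i+1}=m$ and $v_i+e_{i+(n-1)/2}+c=m$, eliminate the midpoints, and expect to reach a circulant recurrence of the form $v_i+v_{i+1}-v_{i+(n+1)/2}=c$ for all $i$. Writing $g_i=v_i-c$ turns this into the homogeneous system $g_{i+(n+1)/2}=g_i+g_{i+1}$ with $\sum_i g_i=0$, and the crux is to show it has only the trivial solution. I would analyze it via the discrete Fourier transform: a nonzero solution requires an $n$-th root of unity $\omega$ with $\omega^{(n+1)/2}=\omega+1$; the constraint $|\omega+1|=1$ forces $\omega$ to be a primitive cube root of unity, and a short phase check shows $\omega^{(n+1)/2}=\omega+1$ then demands $n\equiv 0\pmod 6$, which is impossible for odd $n$. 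Thus $g_i\equiv 0$, every vertex would coincide with the center, and no magic $n$-gon exists for odd $n$ — so the statement holds vacuously there. I expect this circulant/root-of-unity step to be the only genuinely delicate point, the even case and the two counting identities being routine.
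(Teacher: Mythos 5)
Your proposal is correct, but it takes a genuinely different route from the paper's proof. The paper argues combinatorially and locally: it first notes that deleting $c$ from $\{1,\ldots,2n+1\}$ must leave $n$ pairs of equal sum, forcing $c\in\{1,n+1,2n+1\}$, and then eliminates $c=1$ and $c=2n+1$ by placing the extreme pair ($(2,2n+1)$, respectively $(1,2n)$) on a diagonal and contradicting an adjacent side sum, with a case split on the parity of $n$; distinctness of the entries is the engine of both contradictions. You instead work globally: summing the $n$ diagonal equations and the $n$ side equations gives $nm=S+(n-1)c$ and $nm=2V+E=V+(S-c)$, hence $V=nc$, and for even $n$ the fact that diameters pair vertex with vertex and midpoint with midpoint yields $V=E$ and then $(2n+1)c=S$, so $c=n+1$ with no appeal to distinctness at all. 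For odd $n$ you prove something strictly stronger than the proposition requires, namely that no magic $n$-gon exists (this is the paper's Theorem 3.1, proved there by a separate linear-algebra argument): eliminating $e_j=m-v_j-v_{j+1}$ from the diagonal equations does give the circulant recurrence $v_i+v_{i+1}-v_{i+(n+1)/2}=c$, and your root-of-unity analysis is sound --- the symbol $\omega^{(n+1)/2}-\omega-1$ can vanish at an $n$-th root of unity only if $|\omega+1|=1$, i.e.\ $\omega$ is a primitive cube root of unity, and the phase condition then forces $n\equiv 0\pmod 6$, impossible for odd $n$; so all $v_i$ would equal $c$, a contradiction. The trade-off: the paper's proof is elementary and self-contained, while yours buys a cleaner even case and gets the odd-case nonexistence theorem as a by-product, at the price of circulant/Fourier machinery the paper never needs; if you write it up, the odd-case steps currently phrased as expectations (the derivation of the recurrence and the phase check) should be carried out explicitly, but both do hold as stated.
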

	\begin{proof}
		Consider a regular $n$-gon. Then, the numbers to be placed in the nodes of the $n$-gon are
		\begin{equation*}
		S = \{1,2,3,...,n,n+1,n+2,...,2n-1,2n,2n+1\}.
		\end{equation*}
		Let $c$ be the number in the center of the magic polygon. Then, considering the $n$ diagonals, there are $n$ distinct pairs of numbers summing to $m - c$, where $m$ denotes the magic sum. Since $c \in S$, then $c \in \{ 1, n+1, 2n+1\}$.  Furthermore, the orientation of a diagonal of an $n$-gon is dependent on the parity of $n$ and can be illustrated as:
		
		\begin{figure}[h]
			\begin{subfigure}{.25\textwidth}
				\centering
				\begin{tikzpicture}[scale=.65]
				\draw (0,0)--(2,0);
				\draw (1,0)--(1,2.5);
				\draw (1,2.5)--(3,2);
				\draw (1,2.5)--(-1,2);
				\draw (0,2.25)--(2,0);
				\draw (0,0)--(2,2.25);
				\draw[fill=white] (0,0) circle (10pt);
				\draw[fill=white] (1,0) circle (10pt);
				\draw[fill=white] (2,0) circle (10pt);
				\draw[fill=white] (1,2.5) circle (10pt);
				\draw[fill=white] (3,2) circle (10pt);
				\draw[fill=white] (-1,2) circle (10pt);
				\draw[fill=white] (2,2.25) circle (10pt);
				\draw[fill=white] (0,2.25) circle (10pt);
				\draw[fill=white] (1,1.25) circle (10pt);
				
				\node at (0,0) {$a$};
				\node at (1,0) {$y$};
				\node at (2,0) {$b$};
				\node at (1,1.25) {$c$};
				\node at (1,2.5) {$x$};
				\node at (-1,2) {$d$};
				\node at (0,2.25) {$e$};
				\end{tikzpicture}
				\caption{$n$ odd}
				\label{fig:odd}
			\end{subfigure}%
			\begin{subfigure}{.25\textwidth}
				\centering
				\begin{tikzpicture}[scale=.65]
				\draw (0,0)--(3,0)--(0,3)--(3,3)--(0,0);
				\draw (1.5,0)--(1.5,3);
				\draw[fill=white] (0,0) circle (10pt);
				\draw[fill=white] (1.5,0) circle (10pt);
				\draw[fill=white] (3,0) circle (10pt);
				\draw[fill=white] (1.5,1.5) circle (10pt);
				\draw[fill=white] (0,3) circle (10pt);
				\draw[fill=white] (1.5,3) circle (10pt);
				\draw[fill=white] (3,3) circle (10pt);
				\node at (0,3) {$x$};
				\node at (1.5,3) {$y$};
				\node at (1.5,1.5) {$c$};
				\node at (0,0) {$d$};
				\node at (1.5,0) {$e$};
				\node at (3,0) {$f$};
				\node at (3,3) {$g$};
				\end{tikzpicture}
				\subcaption{$n$ even}
				\label{fig:even}
			\end{subfigure}
			\caption{Layout of opposing edges when $n$ is odd versus $n$ is even}
			\label{fig:test}
		\end{figure}
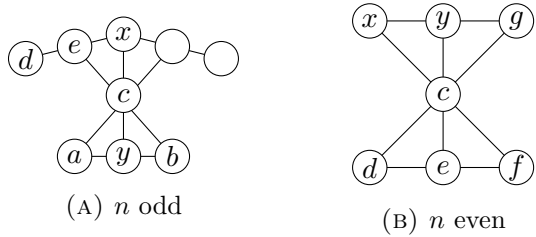
		
		\noindent Now, we consider two cases.
		
		\begin{propcase}
			Assume $c=1$.  Then, the $n$ distinct pairs of numbers make up the elements in the set $\{(2,2n+1),(3,2n),\ldots ,(n+1,n+2)\}$, and the magic sum is $m=2n+4$.  Consider the placement of the pair $(2,2n+1)$ on one of the diagonals.  If $n$ is odd, then using the labeling in Figure~\ref{fig:odd}, either $x=2$ or $y=2$.  If $x=2$,  then $a+b=(2n+4)-(2n+1)=3$.  However, $a,b\in S\setminus\{1,2,2n+1\}$, a contradiction.  If $y=2$, then similarly $d+e=3$, a contradiction. Thus, the pair $(2,2n+1)$ cannot be placed on a diagonal when $n$ is odd.  If $n$ is even, then using the labeling in Figure~\ref{fig:even}, either $x=2$ or $y=2$.  If $x=2$, then $d+e=(2n+4)-(2n+1)=3$, a contradiction.  If $y=2$, then similarly $d+f=3$, a contradiction.  Thus, the pair $(2,2n+1)$ cannot be placed on a diagonal when $n$ is even.  Therefore, no magic $n$-gon exists if $c=1$.
		\end{propcase}
		
		\begin{propcase}
			Assume $c=2n+1$.  Then, the $n$ distinct pairs of numbers are the elements in the set $\{(1,2n),(2,2n-1),\ldots ,(n,n+1)\}$, and the magic sum is $m=4n+2$.  Consider the placement of the pair $(1,2n)$ on one of the diagonals.  If $n$ is odd, then using the labeling in Figure~\ref{fig:odd}, either $x=1$ or $y=1$.  If $x=1$, then $d+e=(4n+2)-1=4n+1$.  However, $d,e\in S\setminus\{1,2n,2n+1\}$, so $d+e\leq (2n-1)+(2n-2)=4n-3$, a contradiction.  If $y=1$, then similarly $a+b=4n+1$, a contradiction.   Thus, the pair $(1,2n)$ cannot be placed on a diagonal when $n$ is odd.  If $n$ is even, then using the labeling in Figure~\ref{fig:even}, either $x=1$ or $y=1$.  If $x=1$, then $g+y=(4n+2)-1=4n+1$, a contradiction.  If $y=1$, then similarly $x+g=4n+1$, a contradiction.  Hence, the pair $(1,2n)$ cannot be placed on a diagonal when $n$ is even.  Therefore, no magic $n$-gon exists if $c=2n+1$.
		\end{propcase}
		
		\noindent Therefore, if a magic $n$-gon exists, $c=n+1$.
	\end{proof}
	
	Next, we investigate what the magic sum, $m$, must be in a magic polygon.
	
	\begin{proposition}\label{magicsum}
		Let $n\in\mathbb{N}\setminus\{1,2\}$.  If a magic $n$-gon exists, the magic sum is $m=3n+3$.
	\end{proposition}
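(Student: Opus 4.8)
The plan is to sum the magic-sum constraints over all $n$ diagonals, which counts each node a controlled number of times, and then combine this with Proposition~\ref{centernumber} to pin down $m$ exactly. First I would recall that the center number is $c = n+1$, and record that the entries are the numbers $1$ through $2n+1$, whose total is
\begin{equation*}
\sum_{k=1}^{2n+1} k = \frac{(2n+1)(2n+2)}{2} = (n+1)(2n+1).
\end{equation*}
Removing the center, the $2n$ outer nodes (the $n$ vertices together with the $n$ midpoints) therefore sum to $(n+1)(2n+1) - (n+1) = 2n(n+1)$.

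Next I would analyze the diagonals. The key geometric observation is that the $2n$ outer nodes are equally spaced around the polygon, so the $n$ diagonals through the center pair them up: each diagonal joins two diametrically opposite outer nodes through the center, and every outer node lies on exactly one diagonal. (Whether a diagonal joins two vertices, two midpoints, or a vertex to a midpoint depends on the parity of $n$, exactly as in Figure~\ref{fig:test}, but this does not affect the count.) Consequently, when I add the $n$ diagonal equations, each outer node is counted once and the center is counted $n$ times, which gives
\begin{equation*}
nm = \underbrace{2n(n+1)}_{\text{outer nodes}} + \, n(n+1) = 3n(n+1).
\end{equation*}
Dividing by $n$ yields $m = 3n+3$.

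The main step to justify carefully is the pairing claim — that the $n$ diagonals partition the $2n$ outer nodes into opposite pairs through the center — since everything else is routine arithmetic; I expect this to follow immediately from the regularity of the $n$-gon, with the node at angular position $j$ paired to the one at position $j+n$ modulo $2n$. As a reassuring check, the hexagon of Example~\ref{hexagon} has $n = 6$, and the formula predicts $m = 3(6)+3 = 21$, matching the stated magic sum.
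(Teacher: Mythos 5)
Your proposal is correct and takes essentially the same approach as the paper: both arguments double-count the diagonal constraints, using the fact that the $n$ diagonals through the center partition the $2n$ outer nodes into pairs, together with the center value $c=n+1$ from Proposition~\ref{centernumber} and the total $\sum_{k=1}^{2n+1}k$. The only difference is presentational --- the paper equates two computations of the outer-node sum, namely $n(m-n-1)$ and $2n^2+2n$, rather than summing the $n$ diagonal equations directly, but the resulting equation is identical.
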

	\begin{proof}
		Assume a magic $n$-gon exists.  Then, the $n$ diagonals all sum to the magic sum, $m$.  Since each of diagonals contains the center number, $c$, Proposition~\ref{centernumber} implies there are $n$ distinct pairs of numbers summing to 
		\[m-c=m-(n+1)=m-n-1.\]
		Then, the nodes placed on the edges of the polygon (i.e.\ all nodes except the center node) sum to 
		\begin{equation}\label{sum1}
		n(m-n-1)=nm-n^2-n.
		\end{equation}
		Alternatively, this sum could be computed as
		\begin{equation}\label{sum2}
		\displaystyle{\left(\sum_{k=1}^{2n+1} k\right)-(n+1) = \frac{(2n+1)(2n+2)}{2}-n-1=2n^2+2n.}
		\end{equation}
		Setting Equation~\ref{sum1} and Equation~\ref{sum2} equal to each other produces $m=3n+3$.
	\end{proof}
	
	\section{Existence of Magic Polygons}
	
	In this section, we present the main results of the paper pertaining to the existence of magic polygons. For a regular $n$-gon, since  the placement of the diagonals varies based on the parity of $n$, it makes sense to consider the magic polygons in two distinct classes based on whether $n$ is even or odd.
	
	\begin{theorem}
		Let $n\in\mathbb{N}$ be odd, with $n\geq 3$. Then no such magic $n$-gon exists.
	\end{theorem}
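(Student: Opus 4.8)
The plan is to encode the side-sum and diagonal-sum conditions as a cyclic system of linear equations in the vertex labels, eliminate the midpoint labels to collapse everything into a single homogeneous recurrence around the polygon, and then show that this recurrence admits only the constant solution $V_i = n+1$ for all $i$. Since the labels are distinct elements of $\{1,\dots,2n+1\}$, that constant solution is impossible, which yields the nonexistence.

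First I would fix notation: label the vertices $V_0,\dots,V_{n-1}$ cyclically and let $M_i$ be the midpoint node on the side joining $V_i$ and $V_{i+1}$, with all indices read mod $n$. By Proposition~\ref{centernumber} and Proposition~\ref{magicsum} the center node is $n+1$ and the magic sum is $m=3n+3$. Because $n$ is odd, each diagonal joins a vertex to the midpoint of the opposite side, and a short computation with the vertex/midpoint angles shows $V_i$ lies on the same diagonal as $M_{i+d}$, where $d=(n-1)/2$. The diagonal condition then reads $V_i+M_{i+d}=m-(n+1)=2n+2$, i.e. $M_j=2n+2-V_{j-d}$, while each side gives $V_i+M_i+V_{i+1}=3n+3$. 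Substituting the first relation into the second and using $-d\equiv e\pmod n$ with $e=(n+1)/2$ produces, for every $i$,
\[
V_i+V_{i+1}-V_{i+e}=n+1.
\]
Setting $U_i=V_i-(n+1)$ turns this into the homogeneous cyclic recurrence $U_{i+e}=U_i+U_{i+1}$ on $\mathbb{Z}/n\mathbb{Z}$, and since each $V_i$ is a distinct element of $\{1,\dots,2n+1\}\setminus\{n+1\}$, we have $U_i\neq 0$ for every $i$. It therefore suffices to prove that this recurrence forces $U_i=0$ for all $i$.

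This final step is the crux, and I expect it to be the main obstacle. The recurrence is a circulant linear system whose coefficient matrix has eigenvalues $1+\omega-\omega^{e}$ as $\omega$ ranges over the $n$-th roots of unity; a nonzero solution exists precisely when some eigenvalue vanishes, i.e. when $1+\omega=\omega^{e}$ for some $n$-th root of unity $\omega$. Taking absolute values forces $|1+\omega|=1$, which holds only for the primitive cube roots $\omega=e^{\pm 2\pi i/3}$. For these one computes $1+\omega=e^{\pm i\pi/3}$, a primitive \emph{sixth} root of unity, whereas $\omega^{e}$ is a \emph{cube} root of unity; the two can coincide only if $6\mid n$, which is impossible for odd $n$. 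Hence no eigenvalue vanishes, the matrix is nonsingular, and $U\equiv 0$, contradicting $U_i\neq 0$. The delicate point is making this nonvanishing uniform across all residues of $n$: when $3\mid n$ the candidate root $e^{\pm 2\pi i/3}$ genuinely is an $n$-th root of unity and must be excluded by the sixth-root-versus-cube-root distinction rather than discarded at the outset.

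Should one prefer to avoid roots of unity, there is a partial elementary shortcut worth recording: the diagonal pairs $\{V_i,M_{i+d}\}$ each sum to the even number $2n+2$ and so are parity-homogeneous, which reduces the side relations modulo $2$ to $a_i+a_{i+1}+a_{i+e}\equiv 0\pmod 2$ with $\sum_i a_i=(n+1)/2$, where $a_i$ is the parity of $V_i$. Summing these congruences forces $(n+1)/2$ to be even, immediately settling the case $n\equiv 1\pmod 4$. However, the case $n\equiv 3\pmod 4$ survives this parity count, so the circulant argument above is still needed to obtain the theorem in full.
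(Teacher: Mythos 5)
Your proof is correct, but it takes a genuinely different route from the paper's. The paper argues locally: since every diagonal pair must sum to $m-(n+1)=2n+2$, the values $1$ and $2n+1$ must share a diagonal, and the five linear equations coming from the two sides meeting the vertex-end of that diagonal, the side containing its midpoint-end, and the two diagonals through that side's endpoints row-reduce to force two distinct nodes to carry equal values ($e=g$ in the labeling of Figure~\ref{fig:odddiagonal}); the contradiction comes from a single small linear system, with no global bookkeeping. You instead encode \emph{all} of the constraints at once: eliminating the midpoints via $M_j=2n+2-V_{j-d}$ collapses the side and diagonal conditions into the cyclic recurrence $U_{i+e}=U_i+U_{i+1}$ with $e=(n+1)/2$, and nonsingularity of the circulant $I+S-S^{e}$, verified at the $n$-th roots of unity, forces the constant solution $V_i\equiv n+1$, which distinctness forbids. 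Your route is less elementary---it needs circulant eigenvalue theory and a roots-of-unity case analysis---but it proves something stronger and more explanatory: for odd $n$ the side/diagonal conditions rigidly determine every node, so the obstruction is structural rather than tied to the particular pair $(1,2n+1)$, and the mechanism of failure (nonvanishing of $1+\omega-\omega^{e}$ on the unit circle) is isolated cleanly. Two minor remarks: your clause ``the two can coincide only if $6\mid n$'' is looser than necessary, since $\omega^{e}$ is a power of an element of order $3$ and hence has order dividing $3$, while $1+\omega$ has order exactly $6$, so $1+\omega=\omega^{e}$ is impossible for \emph{every} $n$ (the parity of $n$ enters only through the geometric fact that diagonals join vertices to opposite midpoints, not through this computation); and your mod-$2$ shortcut is correctly flagged as settling only $n\equiv 1\pmod 4$, so it cannot replace the circulant step.
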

	
	\begin{proof}
		Let $n=2k+1$ where $k\in\mathbb{N}$, and assume a magic $n$-gon exists.   From Proposition~\ref{centernumber}, the center number is $c=n+1=2k+2$; and from Proposition~\ref{magicsum}, the magic sum is $m=3n+3=6k+6$.  Then the $n$ distinct pairs of numbers that must be placed onto the $n$ diagonals sum to $m-c=(6k+6)-(2k+2)=4k+4$ and are in the set $P=\{(1,4k+3),(2,4k+2),\ldots, (2k+1,2k+3)\}$.  Consider the placement of the pair $(1,4k+3)$ on a diagonal.   Since $n$ is odd, each diagonal contains a vertex of the $n$-gon as well as the midpoint of the opposing side, depicted as:
		
		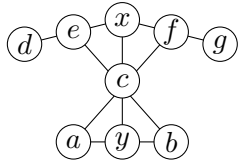
\begin{figure}[h]
			\centering
			\begin{tikzpicture}[scale=.65]
			\draw (0,0)--(2,0);
			\draw (1,0)--(1,2.5);
			\draw (1,2.5)--(3,2);
			\draw (1,2.5)--(-1,2);
			\draw(0,2.25)--(2,0);
			\draw (0,0)--(2,2.25);
			\draw[fill=white] (0,0) circle (10pt);
			\draw[fill=white] (1,0) circle (10pt);
			\draw[fill=white] (2,0) circle (10pt);
			\draw[fill=white] (1,2.5) circle (10pt);
			\draw[fill=white] (3,2) circle (10pt);
			\draw[fill=white] (-1,2) circle (10pt);
			\draw[fill=white] (2,2.25) circle (10pt);
			\draw[fill=white] (0,2.25) circle (10pt);
			\draw[fill=white] (1,1.25) circle (10pt);
			
			\node at (0,0) {$a$};
			\node at (1,0) {$y$};
			\node at (2,0) {$b$};
			\node at (1,1.25) {$c$};
			\node at (1,2.5) {$x$};
			\node at (3,2) {$g$};
			\node at (-1,2) {$d$};
			\node at (2,2.25) {$f$};
			\node at (0,2.25) {$e$};
			\end{tikzpicture}
			\caption{Layout of a diagonal when $n$ is odd.}
			\label{fig:odddiagonal}
		\end{figure}
		
		\noindent Then, using the labeling of Figure~\ref{fig:odddiagonal}, either $x=1$ or $x=4k+3$.
		\begin{theoremcase}
			Assume $x=1$.  Then $y=4k+3$, and we get the following system of 5 linear equations:
			\[\hspace{1in}d+e=f+g=6k+5\hspace{.25in} a+b=2k+3\hspace{.25in} b+e=a+f=4k+4.\]
			Solving using elementary row operations on an augmented matrix whose columns correspond to $a$, $b$, $d$, $e$, and $f$ (respectively) produces the following:
			\[\hspace{1in}\begin{array}{rcl}\left[\begin{array}{rrrrrrr}
			0 & 0 & 1 & 1 & 0 & 0 & 6k+5\\
			0 & 0 & 0 & 0 & 1 & 1 & 6k+5\\
			1 & 1 & 0 & 0 & 0 & 0 & 2k+3\\
			0 & 1 & 0 & 1 & 0 & 0 & 4k+4\\
			1 & 0 & 0 & 0 & 1 & 0 & 4k+4\\
			\end{array}\right]  & \xrightarrow{\mbox{RREF}} & 
			\left[\begin{array}{rrrrrrr}
			1 & 0 & 0 & 0 & 0 & -1 & -2k-1\\
			0 & 1 & 0 & 0 & 0 & 1 & 4k+4\\
			0 & 0 & 1 & 0 & 0 & 1 & 6k+5\\
			0 & 0 & 0 & 1 & 0 & -1 & 0\\
			0 & 0 & 0 & 0 & 1 & 1 & 6k+5\\
			\end{array}\right].\end{array}\]
			The fourth row implies that $e=g$.  Hence, $x\neq 1$.
		\end{theoremcase}
		
		\begin{theoremcase}
			Assume $x=4k+3$.  Then $y=1$, and we get the following system of 5 linear equations:
			\[\hspace{1in}d+e=f+g=2k+3\hspace{.25in}a+b=6k+5\hspace{.25in}b+e=a+f=4k+4.\]
			As in the case when $x=1$, we find that this system is consistent, and all solutions must have $e=g$.  Hence, $x\neq 4k+3$.
		\end{theoremcase}
		\noindent Thus, the pair $(1,4k+3)$ cannot be placed.  Hence, a magic $n$-gon does not exist when $n$ is odd.
	\end{proof}
	
	It remains to consider $n$-gons where $n$ is even.  For the remainder of the paper, we will denote the integers placed in the nodes at the vertices of the $n$-gon, indexed in a clockwise order, by $v_1,\ldots, v_n$. 
	
	\begin{lemma}\label{function}
		Let $n\in\mathbb{N}$ be even with $n\geq 6$.  Then, the map 
		\[f:\left\{v_i : i\in\mathbb{N}\mbox{ and }1\leq i\leq \frac{n}{2}\right\} \rightarrow \{1,2,\ldots, 2n+1\}\]
		where $f(v_1)=n-1$, $f(v_2)=2n+1$, and for $i \geq 3$: 
		\[f(v_i)=\left\{\begin{array}{rl} n+i & ,i=2\ell\\ i-1 & ,i=2\ell +1\end{array}\right.\]
		is a well-defined function. 
	\end{lemma}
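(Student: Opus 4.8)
The plan is to verify directly the two conditions required for $f$ to be a well-defined function: that each element of the domain is assigned exactly one value, and that every value produced lands in the codomain $\{1,2,\ldots,2n+1\}$.

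For single-valuedness and totality, I would note that the domain $\{v_i : 1 \le i \le n/2\}$ splits into the disjoint pieces $\{v_1\}$, $\{v_2\}$, and $\{v_i : 3 \le i \le n/2\}$, on which $f$ is given by the three separate clauses of the definition. On the last piece the rule is piecewise in the parity of $i$, and since every integer $i \ge 3$ is of exactly one of the forms $2\ell$ or $2\ell+1$, the two branches are mutually exclusive and jointly exhaustive there. As the index ranges $i=1$, $i=2$, and $i \ge 3$ do not overlap, each $v_i$ receives one and only one value, so $f$ is single-valued and total.

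The substantive part is checking codomain membership, which I would carry out case by case. For $i=1,2$ the assigned values $n-1$ and $2n+1$ plainly lie in $\{1,\ldots,2n+1\}$. For even $i$ with $3 \le i \le n/2$ (hence $4 \le i \le n/2$, a possibly empty range for small $n$) the value is $n+i$, and $n+4 \le n+i \le \tfrac{3n}{2} \le 2n+1$ confirms membership; the only inequality in $n$ here, $\tfrac{3n}{2}\le 2n+1$, holds for every $n \ge 1$. For odd $i$ with $3 \le i \le n/2$ the value is $i-1$, and $2 \le i-1 \le \tfrac{n}{2}-1 < 2n+1$ finishes the verification.

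I expect no deep obstacle: the entire claim reduces to a short list of elementary inequalities together with the parity observation. The one point worth flagging is \emph{why} $v_1$ is singled out rather than governed by the generic formula — extending the odd-index rule to $i=1$ would return $i-1=0\notin\{1,\ldots,2n+1\}$, so the explicit value $n-1$ is precisely what keeps $f$ inside its codomain at the smallest index. The standing hypothesis $n\ge 6$ is not strictly needed to make these bounds hold; it merely ensures there are indices $i\ge 3$ in the domain and that $n-1\ge 1$, and it aligns this lemma with the construction that follows.
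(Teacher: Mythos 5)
Your proposal is correct and follows essentially the same route as the paper's proof: a case split on $i=1,2$, then on the parity of $i\geq 3$, with the same elementary inequalities bounding $n+i$ and $i-1$ inside $\{1,2,\ldots,2n+1\}$. Your additional observations (explicit single-valuedness via disjointness of the parity branches, why $v_1$ needs a special value, and that $n\geq 6$ is not needed for the bounds themselves) are accurate refinements but do not change the argument.
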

	\begin{proof}  
		Let $v_i\in\left\{v_i : i\in\mathbb{N}\mbox{ and }1\leq i\leq \frac{n}{2}\right\}$.  If $i=1,2$, then $f(v_i)\in \{1,2,\ldots, 2n+1\}$.  Assume $i\geq 3$.
		\begin{lemmacase}
			If $i=2\ell$, then $f(v_i)=n+i\in\mathbb{N}$.  Moreover, $f(v_i)=n+i\leq n+\frac{n}{2}\leq 2n$ and $f(v_i)\in\{1,2,\ldots, 2n+1\}$.
		\end{lemmacase}
		\begin{lemmacase}
			If $i=2\ell +1$, then $f(v_i)=i-1\in\mathbb{N}$ since $i\geq 3$.  Moreover, $f(v_i)=i-1= 2\ell +1 -1 =2\ell\leq \frac{n}{2}-1$ and $f(v_i)\in\{1,2,\ldots, 2n+1\}$.
		\end{lemmacase}
		\noindent Therefore, $f$ is a well-defined function.
	\end{proof}
	
	It should be noted that to complete a magic $n$-gon, determining the values to be placed in $v_1, v_2,\ldots , v_\frac{n}{2},$ along with the center number, is sufficient because, for $1\leq i\leq \frac{n}{2}$, applying Propositions~\ref{centernumber} and~\ref{magicsum} results in an extension of the function $f$ in Lemma~\ref{function} to a map $f^*$ where:
	\begin{equation}\label{ext1}
	\begin{array}{rcl}
	f^*(v_{\frac{n}{2}+i}) & = & m-c-f(v_i) = (3n+3) - (n+1)-f(v_i) = 2n+2 - f(v_i)\\
	\end{array}
	\end{equation}
	Additionally, for $1\leq i\leq n$, we will use $m_i$ to denote the node located at the midpoint of the edge labeled by $v_i$ and $v_{i+1}$, where $v_{n+1}:=v_1$.  With this notation, for $1\leq i\leq n$:
	\begin{equation}\label{ext2}
	f^*(m_i) = m - f^*(v_i)-f^*(v_{i+1}) = 3n+3-f^*(v_i)-f^*(v_{i+1}).
	\end{equation}
	
	\begin{corollary}\label{functionext} 
		Let $n\in\mathbb{N}$ be even with $n\geq 6$.  Then, the map 
		\[f:\left\{v_i : i\in\mathbb{N}\mbox{ and }1\leq i\leq \frac{n}{2}\right\} \rightarrow \{1,2,\ldots, 2n+1\}\]
		from Lemma~\ref{function} can be extended to a map 
		\[f^*:\{v_i :i\in\mathbb{N}\mbox{ and }1\leq i\leq n\}\cup\{m_i:i\in\mathbb{N}\mbox{ and }1\leq i\leq n\}\cup\{c\} \rightarrow \{1,2,\ldots, 2n+1\}\]
		using Equations~\ref{ext1} and~\ref{ext2}.  Moreover, $f^*$ is a well-defined function.
	\end{corollary}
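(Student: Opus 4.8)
The plan is to verify directly that $f^*$ assigns to each element of its domain exactly one value, and that every such value lies in the codomain $\{1,2,\ldots,2n+1\}$. Since the vertices $v_i$, the midpoints $m_i$, and the center $c$ are distinct nodes, and since $f^*$ is given by $f$ on $\{v_1,\ldots,v_{\frac n2}\}$, by Equation~\ref{ext1} on $\{v_{\frac n2+1},\ldots,v_n\}$, by Equation~\ref{ext2} on the midpoints, and by $f^*(c)=n+1$ on the center, these four prescriptions have pairwise disjoint domains and each is single-valued. Thus $f^*$ is automatically a well-defined extension of $f$ as soon as each defining formula is shown to land in $\{1,\ldots,2n+1\}$. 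On the first-half vertices this is Lemma~\ref{function}, and $f^*(c)=n+1$ is plainly in range, so only the second-half vertices and the midpoints require attention.

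For the second-half vertices, Equation~\ref{ext1} gives $f^*(v_{\frac n2+i})=2n+2-f(v_i)$ for $1\le i\le\frac n2$. Since Lemma~\ref{function} yields $1\le f(v_i)\le 2n+1$, subtracting from $2n+2$ produces a value again lying in $\{1,\ldots,2n+1\}$, with no further computation needed.

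The substantive part is the midpoints, and here I would first exploit a complementation symmetry. Combining Equations~\ref{ext1} and~\ref{ext2} shows that for $1\le i\le\frac n2-1$,
\[f^*\!\left(m_{\frac n2+i}\right)=3n+3-\bigl(2n+2-f(v_i)\bigr)-\bigl(2n+2-f(v_{i+1})\bigr)=2n+2-f^*(m_i),\]
so that once $f^*(m_i)\in\{1,\ldots,2n+1\}$ is established for $1\le i\le\frac n2-1$, the midpoints $m_{\frac n2+1},\ldots,m_{n-1}$ are automatically in range. This halves the work and leaves the first-half midpoints together with the two ``seam'' midpoints $m_{\frac n2}$ and $m_n$, whose adjacent vertices straddle the two halves.

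For $3\le i\le\frac n2-1$ the explicit vertex values of Lemma~\ref{function} collapse, independently of the parity of $i$, to $f^*(m_i)=2n+3-2i$, which lies in $\{1,\ldots,2n+1\}$ because $1\le i\le\frac n2$; the two initial midpoints give the isolated values $f^*(m_1)=3$ and $f^*(m_2)=n$. I expect the seam midpoints $m_{\frac n2}$ and $m_n$ to be the main obstacle, since the value $f(v_{\frac n2})$ depends on the parity of $\frac n2$, i.e.\ on $n\bmod 4$. I would therefore split into the cases $n\equiv0$ and $n\equiv2\pmod4$ and evaluate each of $f^*(m_{\frac n2})$ and $f^*(m_n)$ directly, obtaining values such as $\frac n2$, $\frac{3n}2+1$, $\frac{3n}2+2$, and $\frac n2+1$, each of which is checked to lie in $\{1,\ldots,2n+1\}$. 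As every node then receives exactly one value in the codomain, $f^*$ is a well-defined function.
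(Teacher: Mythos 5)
Your proof is correct, but it does substantially more than the paper's, which disposes of this corollary in a single sentence: ``The result follows immediately from Lemma~\ref{function}, Lemma~\ref{centernumber}, Equation~\ref{ext1}, and Equation~\ref{ext2}.'' In other words, the paper treats well-definedness as essentially automatic --- the defining prescriptions (the function $f$, Equation~\ref{ext1}, Equation~\ref{ext2}, and the center value $n+1$) have pairwise disjoint domains and each is single-valued --- and it silently defers the codomain checks. You carry those checks out in full, and your computations are all accurate: the range argument $1\leq 2n+2-f(v_i)\leq 2n+1$ for the second-half vertices, the complementation identity $f^*(m_{\frac{n}{2}+i})=2n+2-f^*(m_i)$, the parity-independent formula $f^*(m_i)=2n+3-2i$ for $3\leq i\leq \frac{n}{2}-1$, the isolated values $f^*(m_1)=3$ and $f^*(m_2)=n$, and the seam values $\frac{n}{2}$, $\frac{3n}{2}+1$, $\frac{3n}{2}+2$, $\frac{n}{2}+1$ split according to $n\bmod 4$. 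Notably, these are exactly the formulas the paper itself derives only later, inside Lemma~\ref{Llemma} and the proof of the even-case existence theorem (where they are needed for injectivity); your approach front-loads them to justify well-definedness at the point where it is claimed, which is arguably where they belong. What the paper's route buys is brevity; what yours buys is an actual proof of the stated corollary, at the modest cost of duplicating computations that reappear in the injectivity argument.
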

	\begin{proof}
		The result follows immediately from Lemma~\ref{function}, Lemma~\ref{centernumber}, Equation~\ref{ext1}, and Equation~\ref{ext2}.
	\end{proof}
	
	As we proceed, it should be noted that, with a slight abuse of notation, $c$ will be used to denote both the the node at the center of the polygon as well as the value $f^*(c)=n+1$ as in Proposition~\ref{centernumber}.  
	
	The remainder of the paper will show that the function $f^*$ produces a magic polygon when $n$ is even.  Since 
	\[|\{v_i :i\in\mathbb{N}\mbox{ and }1\leq i\leq n\}\cup\{m_i:i\in\mathbb{N}\mbox{ and }1\leq i\leq n\}\cup\{c\}|=|\{1,2,\ldots ,2n+1\}|=2n+1,\]  
	it will suffice to show that $f^*$ is one-to-one.
	
	\begin{lemma}\label{Llemma}
		Let $n\in\mathbb{N}$ be even with $n\geq 6$.  For $\ell_1,\ell_2, j \in\mathbb{N}$ with $4\leq 2\ell_1,2\ell_2+1\leq\frac{n}{2}$ and $3\leq j<\frac{n}{2}$, using the function $f^*$ from Corollary~\ref{functionext}, 
		\[f^*(v_{\frac{n}{2}+2\ell_1}), f^*(v_{2\ell_2+1}), f^*(m_{\frac{n}{2}+j})\in\{4,5\ldots, n-2\}.\]
	\end{lemma}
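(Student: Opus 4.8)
The plan is to evaluate each of the three quantities directly from the defining formulas for $f$ in Lemma~\ref{function} and for $f^*$ in Equations~\ref{ext1} and~\ref{ext2}, and then verify in each case that the resulting integer lies between $4$ and $n-2$. Since the three quantities are governed by different branches of the definition, I would treat them one at a time and reduce the whole lemma to substitution followed by an interval check.

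First I would handle $f^*(v_{2\ell_2+1})$. Because $2\ell_2+1$ is odd and satisfies $4\leq 2\ell_2+1\leq\frac{n}{2}$, this index lies in the domain of $f$, so $f^*(v_{2\ell_2+1})=f(v_{2\ell_2+1})$; and since the index is odd and at least $3$, the second branch of Lemma~\ref{function} gives $f(v_{2\ell_2+1})=2\ell_2$. The chain $5\leq 2\ell_2+1\leq\frac{n}{2}$ then yields $4\leq 2\ell_2\leq\frac{n}{2}-1\leq n-2$, which is the desired containment. Next I would handle $f^*(v_{\frac{n}{2}+2\ell_1})$ using Equation~\ref{ext1}. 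Since $2\ell_1$ is even and at least $4$, the first branch of Lemma~\ref{function} gives $f(v_{2\ell_1})=n+2\ell_1$, so
\[f^*(v_{\frac{n}{2}+2\ell_1}) = 2n+2-(n+2\ell_1) = n+2-2\ell_1.\]
The constraints $4\leq 2\ell_1\leq\frac{n}{2}$ give the upper bound $n+2-2\ell_1\leq n-2$ and the lower bound $n+2-2\ell_1\geq\frac{n}{2}+2\geq 4$ (using $n\geq 6$).

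The only real bookkeeping is the midpoint term $f^*(m_{\frac{n}{2}+j})$. Applying Equation~\ref{ext2} with $i=\frac{n}{2}+j$, and then Equation~\ref{ext1} to both $v_{\frac{n}{2}+j}$ and $v_{\frac{n}{2}+j+1}$ (both indices $j$ and $j+1$ lie in $\{1,\ldots,\frac{n}{2}\}$ because $3\leq j<\frac{n}{2}$), collapses the expression to
\[f^*(m_{\frac{n}{2}+j}) = f(v_j)+f(v_{j+1})-n-1.\]
I would then split on the parity of $j$. The pleasant point---and the reason the estimate works out cleanly---is that both parities give the same sum: if $j$ is even then $f(v_j)+f(v_{j+1})=(n+j)+j$, while if $j$ is odd then $f(v_j)+f(v_{j+1})=(j-1)+(n+j+1)$, and each equals $n+2j$. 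Hence $f^*(m_{\frac{n}{2}+j})=2j-1$ regardless of parity, and $3\leq j\leq\frac{n}{2}-1$ immediately yields $5\leq 2j-1\leq n-3$, so $f^*(m_{\frac{n}{2}+j})\in\{4,\ldots,n-2\}$.

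I do not expect a genuine obstacle here; the entire lemma reduces to substitution and three interval checks. The one place demanding care is confirming that the relevant indices fall in the ranges where each branch of $f$ and the formula in Equation~\ref{ext1} actually apply---in particular that $j$ and $j+1$ stay within $\{1,\ldots,\frac{n}{2}\}$ so that both neighbors of $m_{\frac{n}{2}+j}$ are evaluated via Equation~\ref{ext1}, and that the parity hypotheses on $2\ell_1$ and $2\ell_2+1$ select the intended cases of Lemma~\ref{function}.
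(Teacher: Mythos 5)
Your proposal is correct and follows essentially the same route as the paper: compute $f^*(v_{\frac{n}{2}+2\ell_1})=n+2-2\ell_1$, $f^*(v_{2\ell_2+1})=2\ell_2$, and $f^*(m_{\frac{n}{2}+j})=2j-1$ by substitution into Lemma~\ref{function} and Equations~\ref{ext1} and~\ref{ext2}, then check each lies in $\{4,\ldots,n-2\}$. In fact your interval bounds ($2\ell_2\leq\frac{n}{2}-1$ and $5\leq 2j-1\leq n-3$) are stated more carefully than the paper's, which contains minor slips ($f^*(v_{2\ell_2+1})\leq\frac{n}{2}+2$ and $7\leq f^*(m_{\frac{n}{2}+j})\leq\frac{n}{2}$) that your version cleans up.
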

	\begin{proof}  From Lemma~\ref{function} and Equations~\ref{ext1} and~\ref{ext2}, we have that
		\[\begin{array}{rcl}
		v_{\frac{n}{2}+2\ell_1} & = & (2n+2)-(n+2\ell_1)=n+2-2\ell_1\\
		v_{2\ell_2+1} & = & 2\ell_2+1-1=2\ell_2 \\
		\end{array}.\]
		Since $4\leq 2\ell_1,2\ell_2+1\leq\frac{n}{2}$, it follows that $\frac{n}{2}+2\leq f^*(v_{\frac{n}{2}+2\ell_1})\leq n-2$ and $3\leq f^*(v_{2\ell_2+1})\leq\frac{n}{2}+2$.  Since $n\geq 6$, $5 \leq f^*(v_{\frac{n}{2}+2\ell_1})\leq n-2$; and since $3\neq 2\ell_2$, $4\leq f^*(v_{2\ell_2+1})\leq\frac{n}{2}+2$.  Hence, $f^*(v_{\frac{n}{2}+2\ell_1}), f^*(v_{2\ell_2+1})\in\mathcal{L}$.  Moreover, from Equations~\ref{ext1} and \ref{ext2}:
		\[\begin{array}{rcl}
		f^*(m_{\frac{n}{2}+j}) & = & m-(2n+2-f^*(v_i))-(2n+2-f^*(v_{i+1}))\\
		& = & \left\{\begin{array}{cl}-n-1+(n+i)+(i+1-1) &,i\mbox{ is even}\\
		-n-1+(i-1)+(n+i+1) & ,i\mbox{ is odd}\end{array}\right.\\
		& = & 2i-1.\end{array}\]
		
		\noindent Since, $4\leq i\leq\frac{n}{2}$, $7\leq f^*(m_{\frac{n}{2}+j})\leq\frac{n}{2}$.  Hence, $f^*(m_{\frac{n}{2}+j})\leq\frac{n}{2}\in\mathcal{L}$.
	\end{proof}
	
	\begin{corollary}\label{Ucor}
		Let $n\in\mathbb{N}$ be even with $n\geq 6$.  For $\ell_1,\ell_2, j \in\mathbb{N}$ with $4\leq 2\ell_1,2\ell_2+1\leq\frac{n}{2}$ and $3\leq j<\frac{n}{2}$, using the function $f^*$ from Corollary~\ref{functionext}, 
		\[f^*(v_{2\ell_1}), f^*(v_{\frac{n}{2}+2\ell_2+1}), f^*(m_j)\in\{n+4,n+5,\ldots, 2n-2\}.\]
	\end{corollary}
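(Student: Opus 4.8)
The plan is to exploit the reflection symmetry $x\mapsto 2n+2-x$ that is already built into the definition of $f^*$ through Equation~\ref{ext1}, and thereby deduce this corollary directly from Lemma~\ref{Llemma} rather than recomputing everything. The guiding observation is that the three quantities named here are precisely the images, under this involution, of the three quantities that Lemma~\ref{Llemma} places in $\{4,5,\ldots,n-2\}$, and that this involution carries $\{4,5,\ldots,n-2\}$ bijectively onto $\{n+4,n+5,\ldots,2n-2\}$.

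First I would handle the two vertex terms, which are immediate. Applying Equation~\ref{ext1} with $i=2\ell_1$ gives $f^*(v_{\frac{n}{2}+2\ell_1})=2n+2-f^*(v_{2\ell_1})$, and applying it with $i=2\ell_2+1$ gives $f^*(v_{\frac{n}{2}+2\ell_2+1})=2n+2-f^*(v_{2\ell_2+1})$; here I use that $f^*$ agrees with $f$ on indices at most $\frac{n}{2}$, which is legitimate since $2\ell_1,2\ell_2+1\leq\frac{n}{2}$. Rearranging, $f^*(v_{2\ell_1})=2n+2-f^*(v_{\frac{n}{2}+2\ell_1})$ while $f^*(v_{\frac{n}{2}+2\ell_2+1})$ is already of the desired form, so each vertex term is the involution-image of a term appearing in Lemma~\ref{Llemma}.

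Next comes the midpoint term, which is the one step requiring genuine work. Using Equation~\ref{ext2} for both $m_j$ and $m_{\frac{n}{2}+j}$ and then substituting Equation~\ref{ext1} for the second-half vertices $v_{\frac{n}{2}+j}$ and $v_{\frac{n}{2}+j+1}$, I expect to obtain $f^*(m_j)+f^*(m_{\frac{n}{2}+j})=2n+2$, that is, $f^*(m_j)=2n+2-f^*(m_{\frac{n}{2}+j})$. The care needed here lies entirely in the index ranges: the hypothesis $3\leq j<\frac{n}{2}$ guarantees $j,j+1\leq\frac{n}{2}$ (so Equation~\ref{ext1} applies to both neighbors) and also $\frac{n}{2}+j+1\leq n$ (so no wraparound to $v_1$ occurs). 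This is the main obstacle -- not difficulty but bookkeeping -- since it is the only place where the relationship is not a single application of Equation~\ref{ext1}.

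Finally I would invoke Lemma~\ref{Llemma} to conclude that $f^*(v_{\frac{n}{2}+2\ell_1})$, $f^*(v_{2\ell_2+1})$, and $f^*(m_{\frac{n}{2}+j})$ all lie in $\{4,5,\ldots,n-2\}$, and observe that $x\mapsto 2n+2-x$ is a strictly decreasing bijection sending $4\mapsto 2n-2$ and $n-2\mapsto n+4$, hence mapping $\{4,5,\ldots,n-2\}$ onto $\{n+4,n+5,\ldots,2n-2\}$. Combined with the three identities above, this places $f^*(v_{2\ell_1})$, $f^*(v_{\frac{n}{2}+2\ell_2+1})$, and $f^*(m_j)$ in $\{n+4,n+5,\ldots,2n-2\}$, as required. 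As an alternative one could instead compute the three values outright -- $f^*(v_{2\ell_1})=n+2\ell_1$, $f^*(v_{\frac{n}{2}+2\ell_2+1})=2n+2-2\ell_2$, and $f^*(m_j)=2n+3-2j$ -- and bound each against the endpoints of the permitted index ranges; but the symmetry argument is cleaner and makes transparent why this corollary is the exact mirror of Lemma~\ref{Llemma}.
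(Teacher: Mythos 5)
Your proposal is correct and matches the paper's approach: the paper's own proof is just the one-line remark that the corollary ``follows immediately from Lemma~\ref{Llemma} and Equations~\ref{ext1} and~\ref{ext2},'' and your argument is precisely the fleshed-out version of that, using the involution $x\mapsto 2n+2-x$ encoded in those equations (including the needed identity $f^*(m_j)+f^*(m_{\frac{n}{2}+j})=2n+2$, which the paper itself verifies later in the main theorem). Your attention to the index bookkeeping ($j+1\leq\frac{n}{2}$, no wraparound) is a welcome addition that the paper leaves implicit.
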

	\begin{proof}
		This follows immediately from Lemma~\ref{Llemma} and Equations~\ref{ext1} and~\ref{ext2}.
	\end{proof}

	\begin{theorem}
		Consider $n=2k$ where $k\in\mathbb{N}\setminus\{1\}$.  Then a magic $n$-gon exists.
	\end{theorem}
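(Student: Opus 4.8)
The plan is to verify that the labeling $f^*$ constructed in Corollary~\ref{functionext} is itself a magic $n$-gon, which, by the cardinality count already recorded in the text, reduces the entire theorem to showing that $f^*$ is injective. Before touching injectivity, I would check that the magic condition is forced by the defining Equations~\ref{ext1} and~\ref{ext2}, so that no sum needs to be verified by hand. Each edge is magic because Equation~\ref{ext2} defines $f^*(m_i)$ precisely so that $f^*(v_i)+f^*(m_i)+f^*(v_{i+1})=3n+3$. Each diagonal through two opposite vertices is magic because Equation~\ref{ext1} gives $f^*(v_i)+f^*(v_{\frac{n}{2}+i})=2n+2$, and adjoining the center value $c=n+1$ from Proposition~\ref{centernumber} yields $3n+3$. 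For the diagonals through two opposite midpoints, I would add the two instances of Equation~\ref{ext2} for $m_i$ and $m_{\frac{n}{2}+i}$ and substitute the vertex relation to obtain $f^*(m_i)+f^*(m_{\frac{n}{2}+i})=2(3n+3)-2(2n+2)=2n+2$, so that these diagonals also sum to $3n+3$. Hence every side and every diagonal is magic, and since the domain and codomain of $f^*$ each have $2n+1$ elements, it suffices to prove $f^*$ is one-to-one.

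For injectivity I would partition the $2n+1$ nodes according to the range their $f^*$-values occupy, matching the partition of $\{1,\dots,2n+1\}$ into the lower block $\{4,\dots,n-2\}$, the upper block $\{n+4,\dots,2n-2\}$, and the eleven exceptional values $\{1,2,3\}\cup\{n-1,n,n+1,n+2,n+3\}\cup\{2n-1,2n,2n+1\}$. Lemma~\ref{Llemma} drops the families $v_{\frac{n}{2}+2\ell_1}$, $v_{2\ell_2+1}$, and $m_{\frac{n}{2}+j}$ into the lower block, and Corollary~\ref{Ucor} drops $v_{2\ell_1}$, $v_{\frac{n}{2}+2\ell_2+1}$, and $m_j$ into the upper block. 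Within the lower block I would observe that the two vertex families are given by $2\ell_2$ and $n+2-2\ell_1$, hence contribute only \emph{even} values lying in the disjoint intervals $[4,\tfrac{n}{2}-1]$ and $[\tfrac{n}{2}+2,n-2]$, while the midpoint family $2j-1$ contributes exactly the \emph{odd} values $5,7,\dots,n-3$; thus the three families are pairwise disjoint and together cover all of $\{4,\dots,n-2\}$ except for a single even value near $\tfrac{n}{2}$. The upper block is handled symmetrically. The remaining eleven nodes $v_1,v_2,v_3$, $v_{\frac{n}{2}+1},v_{\frac{n}{2}+2},v_{\frac{n}{2}+3}$, $m_1,m_2$, $m_{\frac{n}{2}+1},m_{\frac{n}{2}+2}$, and the center $c$ I would evaluate directly from the formulas of Lemma~\ref{function} and Equations~\ref{ext1},~\ref{ext2} to obtain precisely the eleven exceptional values, each exactly once.

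The main obstacle is the bookkeeping at the two boundary midpoints $m_{\frac{n}{2}}$ and $m_n$, which fall outside the index ranges $3\le j<\frac{n}{2}$ of Lemma~\ref{Llemma} and Corollary~\ref{Ucor} and which swap between the lower and upper block depending on the parity of $\frac{n}{2}$. I would resolve this by a short separate computation in the two cases: when $\frac{n}{2}$ is even one finds $f^*(m_{\frac{n}{2}})=\frac{n}{2}$ and $f^*(m_n)=\frac{3n}{2}+2$, and when $\frac{n}{2}$ is odd one finds $f^*(m_n)=\frac{n}{2}+1$ and $f^*(m_{\frac{n}{2}})=\frac{3n}{2}+1$, so in either case these two midpoints supply exactly the one even value missing from the lower block and the one missing from the upper block. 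Finally I would isolate $n=4$ as a base case, since Lemma~\ref{function} requires $n\ge 6$ and in fact $f^*$ fails to be injective at $n=4$ (there $f^*(m_1)=f^*(v_1)=3$); for this case I would simply exhibit the classical magic square of Figure~\ref{fig:magicpolygonsquare} and verify its sums directly. Assembling the blocks shows $f^*$ is a bijection, and together with the automatic magic sums this proves that $f^*$ realizes a magic $n$-gon.
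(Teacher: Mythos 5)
Your proposal is correct and takes essentially the same route as the paper: both reduce the theorem to showing that the map $f^*$ of Corollary~\ref{functionext} is one-to-one, and both do so by splitting the values into the lower block $\mathcal{L}=\{4,\ldots,n-2\}$, the upper block $\mathcal{U}=\{n+4,\ldots,2n-2\}$, and the eleven exceptional values, with the boundary midpoints $m_{\frac{n}{2}}$ and $m_n$ handled separately according to the parity of $\frac{n}{2}$. Your two local refinements — deriving the midpoint-diagonal sums algebraically from Equations~\ref{ext1} and~\ref{ext2} instead of computing them case by case, and using the even/odd split plus disjoint intervals to get distinctness within each block (which also sidesteps the garbled identities in the paper's Cases 2 and 3), together with your correct observation that $f^*$ fails injectivity at $n=4$ so that only $n=4$ needs a separate exhibit — streamline the execution but do not change the argument.
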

	\begin{proof}
		From Figures~\ref{fig:magicpolygonsquare} and~\ref{fig:magichexagon}, it is clear that magic $n$-gons exist when $n=4,6$.  Moreover, the magic hexagon in Example~\ref{hexagon} can be obtained using the assignment of values to the nodes given by the function $f^*$ in Corollary~\ref{functionext}.  Assume $n\geq 8$.  We will show that the function $f^*$ defined in Corollary~\ref{functionext} is a one-to-one map.  Consider the initial assignment of $f^*(v_1)$, $f^*(v_2)$, and $f^*(v_3)$ using the function $f$ in Lemma~\ref{function}.  Then, we are able to assign values to the nodes in the set $\{v_1,v_2,v_3,c,v_{\frac{n}{2}+1},v_{\frac{n}{2}+2},v_{\frac{n}{2}+3},m_1,m_2,m_{\frac{n}{2}+1},m_{\frac{n}{2}+2}\}$ using Equations~\ref{ext1} and ~\ref{ext2}. Morever,  
		\[\{f^*(v_1),f^*(v_2),f^*(v_3),c,f^*(v_{\frac{n}{2}+1}),f^*(v_{\frac{n}{2}+2}),f^*(v_{\frac{n}{2}+3}),f^*(m_1),f^*(m_2),f^*(m_{\frac{n}{2}+1}),f^*(m_{\frac{n}{2}+2})\}\hspace{.25in} \]
		\[\hspace{3in}=\{n-1,2n+1,2,n+1,n+3,1,2n,3,n,2n-1,n+2\}.\]
		Hence, all other nodes must be assigned values from the set:
		\[\begin{array}{rcl}
		P & = & \{1,2,\ldots, 2n+1\}-\{n-1,2n+1,2,n+1,n+3,1,2n,3,n,2n-1,n+2\} \\
		& = & \{4,5,\ldots , n-2\}\cup\{n+4,n+5,\ldots, 2n-2\}\\
		\end{array}.\]
		Set $\mathcal{L}=\{4,5,\ldots , n-2\}$ and $\mathcal{U}=\{n+4,n+5,\ldots, 2n-2\}$.  
		
		From Lemma~\ref{Llemma} and Corollary~\ref{Ucor} it follows that for $\ell_1,\ell_2, j\in\mathbb{N}$ with $4\leq 2\ell_1,2\ell_2+1\leq\frac{n}{2}$ and $3\leq j<\frac{n}{2}$:
		\[\begin{array}{rcl}
		f^*(v_{\frac{n}{2}+2\ell_1}), f^*(v_{2\ell_2+1}),f^*(m_{\frac{n}{2}+j}) & \in & \mathcal{L}\\
		f^*(v_{2\ell_1}),f^*(v_{\frac{n}{2}+2\ell_2+1}), f^*(m_j) & \in & \mathcal{U}\\
		\end{array}.\]
		It remains to show that the elements in 
		\[P_{\mathcal{L}}:=\{f^*(v_{\frac{n}{2}+2\ell_1}), f^*(v_{2\ell_2+1}),f^*(m_{\frac{n}{2}+j}): 4\leq 2\ell_1,2\ell_2+1\leq\frac{n}{2}\mbox{ and }3\leq j<\frac{n}{2}\}\]
		are distinct for each allowable choice of $\ell_1,\ell_2,j\in\mathbb{N}$, and the elements in 
		\[P_{\mathcal{U}}:=\{f^*(v_{2\ell_1}),f^*(v_{\frac{n}{2}+2\ell_2+1}), f^*(m_j): 4\leq 2\ell_1,2\ell_2+1\leq\frac{n}{2}\mbox{ and }3\leq j<\frac{n}{2} \}\]
		are distinct  for each allowable choice of $\ell_1,\ell_2,j\in\mathbb{N}$.  Consider the set $P_{\mathcal{L}}$.
		\begin{theoremcase}
			Assume $f^*(v_{\frac{n}{2}+2\ell_1})=f^*(v_{2\ell_2+1})$.  Then, $n+2-2\ell_1=(2\ell_2+1)-1$, which implies that $n=2\ell_1+(2\ell_2+1)-2\leq\frac{n}{2}+\frac{n}{2}-2=n-2$, a contradiction.  Hence, $f^*(v_{\frac{n}{2}+2\ell_1})\neq f^*(v_{2\ell_2+1})$.
		\end{theoremcase}
		\begin{theoremcase}
			Assume $f^*(v_{\frac{n}{2}+2\ell_1})=f^*(m_{\frac{n}{2}+j})$.  Then, $n+2-2\ell_1=n-1$, and it follows that $\ell_1=\frac{3}{2}$, a contradiction.  Therefore, $f^*(v_{\frac{n}{2}+2\ell_1})\neq f^*(m_{\frac{n}{2}+j})$.
		\end{theoremcase}
		\begin{theoremcase}
			Assume $f^*(v_{2\ell_2+1})=f^*(m_{\frac{n}{2}+j})$.  Then, $2\ell_2+1-1=n-1$, which implies that $n=2\ell_2+1$, a contradiction.  Thus, $f^*(v_{2\ell_2+1})\neq f^*(m_{\frac{n}{2}+j})$.
		\end{theoremcase}
		\noindent Therefore, the elements of $P_{\mathcal{L}}$ are distinct for each allowable choice of $\ell_1,\ell_2,j\in\mathbb{N}$.  
		
		Next consider the set $P_{\mathcal{U}}$.  From the construction, if $f^*(v_i)\in \mathcal{L}$, then $f^*(v_{\frac{n}{2}+i}) \in \mathcal{U}$; and if $f^*(v_i)\in \mathcal{U}$, then $f^*(v_{\frac{n}{2}+i}) \in \mathcal{L}$.  Similarly, if $f^*(m_i)\in\mathcal{L}$, then $f^*(m_{\frac{n}{2}+i})\in\mathcal{U}$; and if $f^*(m_i)\in\mathcal{U}$, then $f^*(m_{\frac{n}{2}+i})\in\mathcal{L}$.  Then, it follows from Equations~\ref{ext1} and~\ref{ext2} that, since the elements of of $P_{\mathcal{L}}$ are distinct for each allowable choice of $\ell_1,\ell_2,j\in\mathbb{N}$, the elements of $P_{\mathcal{U}}$ are also distinct  for each allowable choice of $\ell_1,\ell_2,j\in\mathbb{N}$.
		
		Furthermore, $|P_{\mathcal{L}}|=n-6$ and $|P_{\mathcal{U}}|=n-6$.  Consider $m_{\frac{n}{2}}$ and $m_{n}$.  From Equation~\ref{ext2} and Lemma~\ref{function}:
		\[f^*(m_{\frac{n}{2}})=\left\{\begin{array}{cl} \frac{n}{2} & ,\mbox{ if }\frac{n}{2}\mbox{ is even}\\ \frac{3n}{2}+1 & ,\mbox{ if }\frac{n}{2}\mbox{ is odd}\end{array}\right.\hspace{.25in}\mbox{and}\hspace{.25in}f^*(m_n)=\left\{\begin{array}{lcl} \frac{3n}{2}+2 & ,\mbox{ if }\frac{n}{2}\mbox{ is even}\\ \frac{n}{2}+1 & ,\mbox{ if }\frac{n}{2}\mbox{ is odd}\end{array}\right.\]
		It follows that, if $\frac{n}{2}$ is even, then $f^*(m_{\frac{n}{2}})\in \mathcal{L}$ and $f^*(m_n)\in\mathcal{U}$; and if $\frac{n}{2}$ is odd, then $f^*(m_{\frac{n}{2}})\in \mathcal{U}$ and $f^*(m_n)\in\mathcal{L}$.  When $\frac{n}{2}$ is even, it is easy to verify that $f^*(m_{\frac{n}{2}})\notin P_{\mathcal{L}}$ and $f^*(m_n)\notin P_{\mathcal{U}}$; and when $\frac{n}{2}$ is odd, it is easy to verify that $f^*(m_{\frac{n}{2}})\notin P_{\mathcal{U}}$ and $f^*(m_n)\notin P_{\mathcal{L}}$.  Moreover, when $\frac{n}{2}$ is even $|P_{\mathcal{L}}\cup\{f^*(m_{\frac{n}{2}})\}|=n-5=|\mathcal{L}|$ and $|P_{\mathcal{U}}\cup\{f^*(m_n)\}|=n-5=|\mathcal{U}|$; and when when $\frac{n}{2}$ is odd $|P_{\mathcal{L}}\cup\{f^*(m_n)\}|=n-5=|\mathcal{L}|$ and $|P_{\mathcal{U}}\cup\{f^*(m_\frac{n}{2})\}|=n-5=|\mathcal{U}|$.
		
		It remains to show that for $1\leq j\leq\frac{n}{2}$, $f^*(m_j)+f^*(m_{\frac{n}{2}+j})=m-(n+1)=2n+2$.  If $j\neq\frac{n}{2}$, then $f^*(m_j)+f^*(m_{\frac{n}{2}+j})=(2n-2j+3)+(2j-1)=2n+2$.  If $j=\frac{n}{2}$, then 
		\[f^*(m_j)+f^*(m_{\frac{n}{2}+j})=\left\{\begin{array}{cl}
		\frac{n}{2}+\left(\frac{3n}{2}+2\right) & ,\mbox{ if }\frac{n}{2}\mbox{ is even}\\
		\left(\frac{3n}{2}+1\right)+\left(\frac{n}{2}+1\right)& ,\mbox{ if }\frac{n}{2}\mbox{ is odd}
		\end{array}\right. = 2n+2.\]
		Therefore, since the function $f^*$ in Corollary~\ref{functionext} is one-to-one and 
		\[|\{f^*(v_i) :i\in\mathbb{N}\mbox{ and }1\leq i\leq n\}\cup\{f^*(m_i):i\in\mathbb{N}\mbox{ and }1\leq i\leq n\}\cup\{c\}|\]
		\[\begin{array}{rcl}
		\hspace{1.25in}& = & |\mathcal{L}\cup\mathcal{U}\cup\{1,2,3,n-1,n,n+1,n+2,n+3,2n-1,2n,2n+1\}|\\
		& = & (n-5) + (n-5) + 11\\
		& = & 2n+1\\
		& = & |\{1,2,\ldots,2n+1\}|,\\
		\end{array}\]
		the function $f^*$ produces a magic $n$-gon when $n$ is even and $n\geq 6$. 
	\end{proof}
	
	\section{Conclusion}
	
	This extension of $3\times 3$ magic squares to magic polygons opens up a realm to be further explored. The function $f^*$ provided in Corollary~\ref{functionext} creates one possible magic $n$-gon when $n$ is even.  For magic squares, work has been done to enumerate the possibilities.  For instance when $n = 3$, there is a unique (up to symmetry) magic square (see~\cite{Swetz}), and the  number of solutions for $4 \times 4$ and $5 \times 5$ magic squares are $880$ and $275,305,224$, respectively (see~\cite{Loly}).  In the case of magic polygons, the enumeration of the distinct magic $n$-gons (up to symmetry) remains open.

\end{document}